\newtheorem{theorem}{Theorem}[section]
\newtheorem{prop}[theorem]{Proposition}
\newtheorem{defn}[theorem]{Definition}
\newtheorem{lemma}[theorem]{Lemma}
\newtheorem{prop-def}{Proposition-Definition}[section]
\newtheorem{coro-def}{Corollary-Definition}[section]
\newtheorem{remark}{Remark}[section]
\newcommand{\nc}{\newcommand}
\nc{\tred}[1]{\textcolor{red}{#1}}
\nc{\tblue}[1]{\textcolor{blue}{#1}}
\nc{\tgreen}[1]{\textcolor{green}{#1}}
\nc{\tpurple}[1]{\textcolor{purple}{#1}}
\nc{\btred}[1]{\textcolor{red}{\bf #1}}
\nc{\btblue}[1]{\textcolor{blue}{\bf #1}}
\nc{\btgreen}[1]{\textcolor{green}{\bf #1}}
\nc{\btpurple}[1]{\textcolor{purple}{\bf #1}}
\newcommand{\efootnote}[1]{}
\renewcommand{\textbf}[1]{}
\newcommand{\delete}[1]{}
\nc{\dfootnote}[1]{{}}          
\nc{\ffootnote}[1]{\dfootnote{#1}}
\nc{\mfootnote}[1]{\footnote{#1}} 
\nc{\ofootnote}[1]{\footnote{\tiny Older version: #1}}
\nc{\mlabel}[1]{\label{#1}}  
\nc{\mcite}[1]{\cite{#1}}  
\nc{\mref}[1]{\ref{#1}}  
\nc{\mcite}[1]{\cite{#1}{{\bf{{\ }(#1)}}}}  
\nc{\mlabel}[1]{\label{#1}  
{\hfill \hspace{1cm}{\bf{{\ }\hfill(#1)}}}}
\nc{\mcite}[1]{\cite{#1}{{\bf{{\ }(#1)}}}}  
\nc{\mref}[1]{\ref{#1}{{\bf{{\ }(#1)}}}}  
\nc{\mbibitem}[1]{\bibitem[\bf #1]{#1}} 
\nc{\mkeep}[1]{\marginpar{{\bf #1}}} 
\nc{\opa}{\ast} \nc{\opb}{\odot} \nc{\op}{\bullet} \nc{\pa}{\frakL}
\nc{\arr}{\rightarrow} \nc{\lu}[1]{(#1)} \nc{\mult}{\mrm{mult}}
\nc{\diff}{\mathfrak{Diff}}
\nc{\opc}{\sharp}\nc{\opd}{\natural}
\nc{\ope}{\circ}
\nc{\opf}{\dashv}
\nc{\opg}{\vdash}
\nc{\oph}{\cdot}
\nc{\bin}[2]{ (_{\stackrel{\scs{#1}}{\scs{#2}}})}  
\nc{\binc}[2]{ \left (\!\! \begin{array}{c} \scs{#1}\\
    \scs{#2} \end{array}\!\! \right )}  
\nc{\bincc}[2]{  \left ( {\scs{#1} \atop
    \vspace{-1cm}\scs{#2}} \right )}  
\nc{\bs}{\bar{S}} \nc{\cosum}{\sqsubset} \nc{\la}{\longrightarrow}
\nc{\rar}{\rightarrow} \nc{\dar}{\downarrow} \nc{\dprod}{**}
\nc{\dap}[1]{\downarrow \rlap{$\scriptstyle{#1}$}}
\nc{\md}{\mathrm{dth}} \nc{\uap}[1]{\uparrow
\rlap{$\scriptstyle{#1}$}} \nc{\defeq}{\stackrel{\rm def}{=}}
\nc{\disp}[1]{\displaystyle{#1}} \nc{\dotcup}{\
\displaystyle{\bigcup^\bullet}\ } \nc{\gzeta}{\bar{\zeta}}
\nc{\hcm}{\ \hat{,}\ } \nc{\hts}{\hat{\otimes}}
\nc{\barot}{{\otimes}} \nc{\free}[1]{\bar{#1}}
\nc{\uni}[1]{\tilde{#1}} \nc{\hcirc}{\hat{\circ}} \nc{\lleft}{[}
\nc{\lright}{]} \nc{\lc}{\lfloor} \nc{\rc}{\rfloor}
\nc{\curlyl}{\left \{ \begin{array}{c} {} \\ {} \end{array}
    \right .  \!\!\!\!\!\!\!}
\nc{\curlyr}{ \!\!\!\!\!\!\!
    \left . \begin{array}{c} {} \\ {} \end{array}
    \right \} }
\nc{\longmid}{\left | \begin{array}{c} {} \\ {} \end{array}
    \right . \!\!\!\!\!\!\!}
\nc{\onetree}{\bullet} \nc{\ora}[1]{\stackrel{#1}{\rar}}
\nc{\ola}[1]{\stackrel{#1}{\la}}
\nc{\ot}{\otimes} \nc{\mot}{{{\boxtimes\,}}}
\nc{\otm}{\overline{\boxtimes}} \nc{\sprod}{\bullet}
\nc{\scs}[1]{\scriptstyle{#1}} \nc{\mrm}[1]{{\rm #1}}
\nc{\margin}[1]{\marginpar{\rm #1}}   
\nc{\dirlim}{\displaystyle{\lim_{\longrightarrow}}\,}
\nc{\invlim}{\displaystyle{\lim_{\longleftarrow}}\,}
\nc{\mvp}{\vspace{0.3cm}} \nc{\tk}{^{(k)}} \nc{\tp}{^\prime}
\nc{\ttp}{^{\prime\prime}} \nc{\svp}{\vspace{2cm}}
\nc{\vp}{\vspace{8cm}} \nc{\proofbegin}{\noindent{\bf Proof: }}
\nc{\proofend}{$\blacksquare$ \vspace{0.3cm}}
\nc{\modg}[1]{\!<\!\!{#1}\!\!>}
\nc{\intg}[1]{F_C(#1)} \nc{\lmodg}{\!
<\!\!} \nc{\rmodg}{\!\!>\!}
\nc{\cpi}{\widehat{\Pi}}
\nc{\sha}{{\mbox{\cyr X}}}  
\nc{\shap}{{\mbox{\cyrs X}}} 
\nc{\shpr}{\diamond}    
\nc{\shp}{\ast} \nc{\shplus}{\shpr^+}
\nc{\shprc}{\shpr_c}    
\nc{\msh}{\ast} \nc{\zprod}{m_0} \nc{\oprod}{m_1}
\nc{\vep}{\varepsilon} \nc{\labs}{\mid\!} \nc{\rabs}{\!\mid}
\nc{\mmbox}[1]{\mbox{\ #1\ }} \nc{\fp}{\mrm{FP}}
\nc{\rchar}{\mrm{char}} \nc{\End}{\mrm{End}} \nc{\Fil}{\mrm{Fil}}
\nc{\Mor}{Mor\xspace} \nc{\gmzvs}{gMZV\xspace}
\nc{\gmzv}{gMZV\xspace} \nc{\mzv}{MZV\xspace}
\nc{\mzvs}{MZVs\xspace} \nc{\Hom}{\mrm{Hom}} \nc{\id}{\mrm{id}}
\nc{\im}{\mrm{im}} \nc{\incl}{\mrm{incl}} \nc{\map}{\mrm{Map}}
\nc{\mchar}{\rm char} \nc{\nz}{\rm NZ} \nc{\supp}{\mathrm Supp}
\nc{\GL}{\mathrm{GL}}
\nc{\Alg}{\mathbf{Alg}} \nc{\Bax}{\mathbf{Bax}} \nc{\bff}{\mathbf f}
\nc{\bfk}{{\bf k}} \nc{\bfone}{{\bf 1}} \nc{\bfx}{\mathbf x}
\nc{\bfy}{\mathbf y}
\nc{\base}[1]{\bfone^{\otimes ({#1}+1)}} 
\nc{\Cat}{\mathbf{Cat}}
\nc{\detail}{\marginpar{\bf More detail}
    \noindent{\bf Need more detail!}
    \svp}
\nc{\Int}{\mathbf{Int}} \nc{\Mon}{\mathbf{Mon}}
\nc{\rbtm}{{shuffle }} \nc{\rbto}{{Rota-Baxter }}
\nc{\remarks}{\noindent{\bf Remarks: }} \nc{\Rings}{\mathbf{Rings}}
\nc{\Sets}{\mathbf{Sets}} \nc{\wtot}{\widetilde{\odot}}
\nc{\wast}{\widetilde{\ast}} \nc{\fopb}{\bar{\odot}}
\nc{\fopa}{\bar{\ast}} \nc{\hodot}[1]{\odot^{#1}}
\nc{\hast}[1]{\ast^{#1}} \nc{\mal}{\mathcal{O}}
\nc{\tet}{\tilde{\ast}} \nc{\teot}{\tilde{\odot}}
\nc{\oex}{\overline{x}} \nc{\oey}{\overline{y}}
\nc{\oez}{\overline{z}} \nc{\oea}{\overline{a}}
\nc{\oeb}{\overline{b}} \nc{\oef}{\overline{f}}
\nc{\weast}[1]{\widetilde{\ast}^{#1}}
\nc{\weodot}[1]{\widetilde{\odot}^{#1}} \nc{\hstar}[1]{\star^{#1}}
\nc{\lae}{\langle} \nc{\rae}{\rangle} \nc{\owd}{\overrightarrow{d}}
\nc{\owc}{\overrightarrow{c}} \nc{\mds}{\mathrm{MDS}}
\nc{\mda}{\mathrm{MDA}} \nc{\mdao}{\mathit{MDA}}
\nc{\CC}{\mathbb C}
\nc{\ZZ}{\mathbb{Z}}
\nc{\cala}{{\mathcal A}} \nc{\calb}{{\mathcal B}}
\nc{\calc}{{\mathcal C}}
\nc{\cald}{{\mathcal D}} \nc{\cale}{{\mathcal E}}
\nc{\calf}{{\mathcal F}} \nc{\calg}{{\mathcal G}}
\nc{\calh}{{\mathcal H}} \nc{\cali}{{\mathcal I}}
\nc{\call}{{\mathcal L}} \nc{\calm}{{\mathcal M}}
\nc{\caln}{{\mathcal N}} \nc{\calo}{{\mathcal O}}
\nc{\calp}{{\mathcal P}} \nc{\calr}{{\mathcal R}}
\nc{\cals}{{\mathcal S}} \nc{\calt}{{\mathcal T}}
\nc{\calw}{{\mathcal W}} \nc{\calk}{{\mathcal K}}
\nc{\calx}{{\mathcal X}} \nc{\CA}{\mathcal{A}}
\nc{\fraka}{{\mathfrak a}} \nc{\frakA}{{\mathfrak A}}
\nc{\frakb}{{\mathfrak b}} \nc{\frakB}{{\mathfrak B}}
\nc{\frakD}{{\mathfrak D}} \nc{\frakg}{{\mathfrak g}}
\nc{\frakH}{{\mathfrak H}} \nc{\frakL}{{\mathfrak L}}
\nc{\frakM}{{\mathfrak M}} \nc{\bfrakM}{\overline{\frakM}}
\nc{\frakm}{{\mathfrak m}} \nc{\frakP}{{\mathfrak P}}
\nc{\frakN}{{\mathfrak N}} \nc{\frakp}{{\mathfrak p}}
\nc{\frakS}{{\mathfrak S}}
\font\cyr=wncyr10 \font\cyrs=wncyr7
\nc{\yong}[1]{\textcolor{blue}{YZ: #1}}
\begin{document}

\title{Homotopy Transfer Theorem for linearly compatible di-algebras}
%

%

\author{Yong Zhang}
\address{Department of Mathematics, Zhejiang University, Hangzhou 310027, China}
\email{tangmeng@zju.edu.cn}


\date{\today}


\begin{abstract}
This paper studies the operad of linearly compatible di-algebras,
denoted by $As^{2}$, which is a nonsymmetric operad encoding the algebras with
two binary operations that satisfy individual and sum associativity
conditions. We also prove that the operad $As^{2}$ is exactly the
Koszul dual operad of the operad $^{2}As$ encoding totally compatible di-algebras. We show that
 the operads $As^{2}$ and $^{2}As$ are
Koszul by rewriting method. We make explicit the Homotopy Transfer
Theorem for $As^{2}$-algebras.
\end{abstract}


\maketitle

\tableofcontents

\setcounter{section}{0}

\section{Introduction}
The notion of associative algebra up to homotopy
has been introduced by Jim Stasheff in \cite{Js} under the name $A_{\infty}$-algebra.
It has the following
important property: starting with a differential graded associative (dga) algebra
$(A, d_{A})$, if $(V, d_{V})$ is a deformation retract of $(A, d_{A})$,
then $(V, d_{V})$ is not a
dga algebra in general, but an $A_{\infty}$-algebra. This is
Kadeishvili's theorem ~\cite{Tk}, which is also
called the Homotopy Transfer Theorem for associative algebras.




In this paper we are interested into replacing the associative operation on $A$ by two associative operations $x*y$ and $x\bullet y$ which are
\emph{linearly compatible}, that is, any linear combination of $*$ and $\bullet$ is associative.
We are going to determine the algebraic structure which is transferred to $V$.
It consists in $n$ $n$-ary operations, for any $n\geq 2$,
 which satisfy some relations analogous to the relations satisfied by the $n$-ary operations in an $A_{\infty}$-algebra.
 In \cite{LV} Loday and Vallette have given a generalization of the Homotopy Transfer Theorem for algebras over a Koszul operad,
 with explicit formulas. The key point is to make the Koszul dual cooperad explicit, and then to make also the differential map explicit
 in the cobar construction. First, we give a new proof of the theorem which says that the Koszul dual operad of linear compatible di-algebras
  is the operad of totally compatible di-algebras
  and that these two operads are Koszul.
   Second,  we describe explicitly the  composition in this operad $^{2}As$, so we obtain the relations satisfied by the generating operations.

Let $\bf k$ be a commutative unitary ring. The tensor product over $\bf k$ is denoted
 by $\ot_{\bf k}$ or simply by $\ot$ if it causes no confusion.


\section{Linearly compatible di-algebras}

We first recall the definition of linearly compatible di-algebra
introduced in \cite{St} by H.\ Strohmayer.
\begin{remark}{\rm
The operad of linearly compatible di-algebras is denoted by
$As^{\langle 2\rangle}$ in ~\cite{Zinb} and denoted
by $As^{2}$ in ~\cite{St},
in which Strohmayer has studied the compatible structures for various symmetric operads. From now on,
in our paper, we choose the symbol $As^{2}$ to denote the operad of linearly compatible di-algebras in nonsymmetric case.}

\end{remark}

\begin{defn}
{\rm A \emph{linearly compatible di-algebra} is defined to be a $\bf k$-module $V$ with two
binary operations $\opa$ and $\op$ that are associative and
satisfy
the relation
$$(x\op y)\opa z+(x\opa y)\op z=x\op(y\opa z)+x\opa(y\op z), \,\, \forall x,y,z \in V.$$
\mlabel{def:lca}}
\end{defn}

In \mcite{St}, we know that the operad
$As^{2}$ of linearly compatible di-algebras is the black product of $As$ and $Lie_2$.

We observe that these three relations are equivalent to the associativity relation for the operation $(x,y)\mapsto \lambda (x\op y)+ \mu(x\opa y)$ for any parameters $\lambda, \mu \in {\bf k}$.

\section{Totally compatible di-algebra and the operad $As^{2}$}
In this section, we give the definition of totally compatible di-algebra and describe its associated nonsymmetric (ns) operad $^{2}As$. We also show that the operad $^{2}As$
 is Koszul by the rewriting method.

\begin{defn}
{\rm ~\cite{St, ZBG} A \emph{totally compatible  di-algebra} is a $\bf k$-module
$A$ with two binary operations:
$$\opa,\bullet: A\ot A\rightarrow A,$$
satisfying the
$\text{TCD}$
axioms:
\begin{enumerate}
\item
$\opa$ and $\bullet$ are associative.
\item
$(x\opa y)\bullet z=x\opa(y\bullet z)=(x\bullet y)\opa z=x\bullet(y\opa z)$, $\forall x,y,z \in A$.
\end{enumerate}}\mlabel{def:tcda}
\end{defn}

Let $^{2}As$
denote
the operad of totally compatible di-algebras as that in symmetric case in \mcite{St}.
So we let $^{2}As$-algebra denote the totally compatible di-algebra.
From the definition of totally compatible di-algebra, we see that the operad $^{2}As$
is nonsymmetric, set-theoretic, binary and quadratic.
The structure of the ns operad $^{2}As$ can be derived from the following result.
\begin{prop}
The vector space $^{2}As_n$ is $n$-dimensional. Let $\mu_{ij}$ be the
operation, given by
$$\mu_{ij}(x_1 \cdots x_n)=x_1\opa \cdots \opa x_{i+1}\bullet \cdots \bullet x_n$$
with $i$ copies of $\opa$ and $j$ copies of $\bullet$.
Then the composition in the operad $^{2}As$ is given by
$$\gamma(\mu_{ij};\mu_{i_{1}\,j_{1}},\cdots,\mu_{i_{k}\,j_{k}})=\mu_{i+i_{1}+\cdots+i_{k}\,j+j_{1}+\cdots+j_{k}}.$$
In particular,
we have
$$\gamma(\mu_{10};\mu_{ij},\mu_{kl})=\mu_{i+j+1\,j+l};\quad \gamma(\mu_{01};\mu_{ij},\mu_{kl})=\mu_{i+k\,j+l+1}.$$
\mlabel{prop:tcd}
\end{prop}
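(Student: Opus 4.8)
The plan is to prove Proposition~\ref{prop:tcd} in three movements: a normalization step showing every monomial in $\opa,\bullet$ equals one of the $\mu_{ij}$, a dimension step showing these operations form a basis, and a bookkeeping step from which the composition formula drops out. Throughout I regard $^{2}As$ as the ns operad generated by the two binary operations $\opa,\bullet$ modulo the five quadratic relations of Definition~\ref{def:tcda}: associativity of $\opa$, associativity of $\bullet$, and the three equalities of the $\mathrm{TCD}$ axiom~(b).

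First I would prove that, modulo these relations, every arity-$n$ monomial reduces to the single canonical form $\mu_{ij}$, where $i$ counts the occurrences of $\opa$ and $j$ those of $\bullet$ (so $i+j=n-1$). Associativity of each operation collapses a same-type block to a parenthesization-free chain, while the compatibility equality $(x\bullet y)\opa z=(x\opa y)\bullet z$ acts as a transposition moving an $\opa$ past an adjacent $\bullet$; iterating, one bubbles all $\opa$'s to the left until the word is sorted as $\opa^{\,i}\bullet^{\,j}$. This simultaneously shows that $\mu_{ij}$ is well defined, independent of parenthesization and of the interleaving of the two operations, and that the $n$ elements $\{\mu_{ij}: i+j=n-1\}$ span $^{2}As_n$, whence $\dim {}^{2}As_n\le n$.

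For the reverse inequality I would prove linear independence of these $\mu_{ij}$, which is the heart of the matter. The natural route is the rewriting method announced in the paper: orient the five relations so that the sorted left-combed chains $\mu_{ij}$ are the normal forms, and verify confluence by checking that every critical pair coming from two overlapping left-hand sides inside an arity-$4$ tree rewrites to a common normal form. Confluence forces distinct pairs $(i,j)$ to have distinct normal forms, so the $n$ spanning elements are linearly independent and $\dim {}^{2}As_n=n$; this is moreover exactly the confluence computation that yields Koszulity. Alternatively one can display a concrete totally compatible di-algebra on which the $n$ forms take manifestly independent values, but the confluence argument is preferable as it feeds directly into the Koszulity statement.

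Finally, the composition formula is immediate once the canonical-form result is available. In a ns operad, $\gamma(\mu_{ij};\mu_{i_1 j_1},\dots,\mu_{i_k j_k})$ is computed by substituting the operation $\mu_{i_l j_l}$ into the $l$-th input slot of $\mu_{ij}$; the resulting monomial uses $i+\sum_l i_l$ copies of $\opa$ and $j+\sum_l j_l$ copies of $\bullet$, so by the normalization step it equals $\mu_{i+i_1+\cdots+i_k,\,j+j_1+\cdots+j_k}$. The two displayed special cases are the instances $k=2$ with outer operation $\mu_{10}$ or $\mu_{01}$. The main obstacle is thus the confluence verification: one must ensure that the three compatibility relations together with the two associativities produce no collapse beyond the identification of monomials with equal $\opa$- and $\bullet$-counts, i.e.\ that all arity-$4$ ambiguities resolve.
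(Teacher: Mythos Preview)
Your proposal is correct, but it takes a genuinely different route from the paper. The paper does not argue directly from the presentation at all: it invokes the explicit description of the free totally compatible di-algebra on a set $X$ established in \cite{ZBG}, namely $^{2}As(X)=\overline{\mathbf{k}\langle X\rangle}\otimes \mathbf{k}\langle X\rangle$, from which $\dim{}^{2}As_n=n$ is read off immediately; the composition formula is then obtained by a direct computation on elements of this free algebra. Your approach instead stays inside the operadic presentation: you obtain the spanning set by normalizing monomials with the TCD relations, and you propose to get linear independence from confluence of the rewriting system.

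What each buys: the paper's argument is short but outsources the real work to the cited construction of the free object. Your argument is self-contained and, as you note, the confluence verification is exactly the computation carried out later in the paper to prove Koszulity via the rewriting method; doing it here would yield the PBW-type basis and Koszulity in one stroke, making the separate Koszulity proof redundant. The only caveat is that your spanning sketch (``bubble all $\opa$'s to the left'') is phrased as if the monomials were unparenthesized words; in the ns operad they are planar binary trees, so you should say explicitly which comb you are reducing to and check that associativity plus axiom~(b) suffice to move any leaf-labelled tree to that comb with the $\opa$'s on top. This is routine, but it is the place where a careless reader might object.
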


\begin{proof}
In ~\cite{ZBG}, we
show
that the triple $(^{2}As(X):
=\overline{{\bf k}<X>}\ot {\bf k}<X>,
\opa,\op)$ is the free totally compatible dialgebra on the set $X$, showing
that the operad $^{2}As$ is $n$-dimensional in arity $n$.
Here ${\bf k}<X>$ denotes the free associative algebra over the set $X$, and $\overline{{\bf k}<X>}$ is its augmentation ideal.

Without loss of generality, for operations $\mu_{ij}$ and $\mu_{kl}$,
given any element $x_1 \cdots x_n$ in $^{2}As(X)$, we have

\begin{eqnarray*}
\gamma(\mu_{ij};\mu_{i_{1}\,j_{1}},\cdots,\mu_{i_{k}\,j_{k}})(x_1 \cdots x_n)
&=& \mu_{ij}(\mu_{i_{1}\,j_{1}}(x_1\cdots x_{i_1+j_1+1}),\cdots,\mu_{i_{k}\,j_{k}}(x_{i_{1}+\cdots+i_{k-1}+j_1+\cdots+j_{k-1}+1}\cdots x_n))\\
&=& \mu_{ij}(x\opa \cdots \opa x_{i_{1}+1}\op \cdots \op x_{i_{1}+j_{1}+1},\\
&\cdots,&  x_{i_{1}+\cdots+i_{k-1}+j_{1}+\cdots+j_{k-1}+1}\opa \cdots \opa x_{i_{1}+\cdots+i_{k-1}+j_{1}+\cdots+j_{k-1}+1+i_{k}}\op \cdots \op x_n)\\
&=& x_1\opa \cdots \opa x_{i_{1}+\cdots+i_{k}+i+1}\op \cdots \op x_n\\
&=& \mu_{i_1+\cdots+i_{k}+i\,j_1+\cdots+j_{k}+j}(x_1\cdots x_n)
\end{eqnarray*}
with $n-(i_{1}+\cdots+i_{k}+i+1)=j_1+\cdots+j_{k}+j$, implying
the composition $\gamma$ of $^{2}As$. Then we get the ns operad $^{2}As=(\bigoplus_n^{2}As_{n},\gamma)$.
\end{proof}

In ~\cite{St},
Strohmayer has proved that the operad $^{2}As$
is Koszul by
using the weight partition method.
Here we give a different proof based on rewriting systems.

\begin{theorem}
The operad $^{2}As$ is Koszul.
\end{theorem}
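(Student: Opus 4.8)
The plan is to apply the rewriting method of Loday--Vallette: I will equip the free nonsymmetric operad on the two binary generators $\opa$ and $\op$ with an admissible order on its tree monomials, check that the quadratic relations of $^{2}As$ form a confluent rewriting system for this order, and then invoke the fact that a quadratic operad admitting a confluent rewriting system is Koszul.

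First I would record the quadratic data. In arity $3$ the free operad has $8$ tree monomials (two planar shapes, each of whose two internal vertices is decorated by $\opa$ or $\op$), while $\dim {}^{2}As_3 = 3$ by Proposition \ref{prop:tcd}; hence the relation space is $5$-dimensional. It is spanned by the two associativity relations for $\opa$ and $\op$ together with the three consecutive differences in the total-compatibility chain $(x\opa y)\op z = x\opa(y\op z) = (x\op y)\opa z = x\op(y\opa z)$. I would then fix the ordering $\opa < \op$ on generators and extend it to a path-lexicographic order under which every left-combed monomial dominates the right-combed ones and, among right combs, $x\op(y\opa z)$ dominates $x\opa(y\op z)$. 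Orienting each relation by its leading monomial yields five rewriting rules whose targets are exactly the three right-combed normal forms $x\opa(y\opa z)$, $x\opa(y\op z)$, $x\op(y\op z)$, that is $\mu_{20}$, $\mu_{11}$, $\mu_{02}$. That the number of normal forms equals $\dim {}^{2}As_3$ is the first consistency check.

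The decisive point is confluence. I would first observe that each of the five rules preserves the pair $(i,j)$ counting the occurrences of $\opa$ and of $\op$ among the internal vertices; moreover a tree monomial is irreducible precisely when it is a right comb along whose right spine all copies of $\opa$ precede all copies of $\op$, which is exactly the normal form $\mu_{ij}$ of Proposition \ref{prop:tcd}, unique in each arity for each bidegree $(i,j)$. I would then enumerate the critical monomials in arity $4$, namely the weight-$3$ trees in which two of the five leading terms overlap along a common internal edge, and verify that the two reduction paths of each critical monomial converge. Because every rule preserves the bidegree and the irreducible monomial of a given bidegree is unique, both paths are forced to terminate at the same $\mu_{ij}$, so each critical monomial is confluent.

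I expect the main obstacle to be twofold and essentially bookkeeping: first, exhibiting a genuinely admissible (well-founded) order that simultaneously orients all five relations in the direction above, which guarantees termination of the rewriting; and second, the finite but somewhat tedious enumeration of the arity-$4$ overlaps. Once every critical monomial is shown confluent, termination and the Diamond Lemma give that the right-combed monomials form a basis in every arity, matching the $n$-dimensional count of Proposition \ref{prop:tcd}; the rewriting-method theorem then yields that $^{2}As$ is Koszul.
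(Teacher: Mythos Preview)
Your proposal is correct and follows essentially the same route as the paper: both apply the rewriting method of Loday--Vallette, orienting the five quadratic relations toward right-combed normal forms and verifying confluence of the arity-$4$ critical monomials. Your confluence argument via the bidegree invariant (each rule preserves $(i,j)$ and the irreducible monomial of each bidegree is unique) is a clean shortcut compared with the paper's explicit diagram chase of the pentagon and hexagon, but the underlying strategy and the resulting PBW basis are identical.
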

\begin{proof}
 Let $E$ be the generating space of binary operations with an ordered basis $\{\opa,\op\}$ such that $\opa >\op$.
 Let $\mu_1:=\opa$ and $\mu_2:=\op$.

Let $R$ be the space of relations, which is spanned
by a set of relators written as in the following basis by the definition
of totally compatible di-algebra in~\mref{def:tcda}:

Let $\circ_1:=((\cdot,\cdot),\cdot)$ and $\circ_2:=(\cdot,(\cdot,\cdot))$.
\begin{equation}
\mu_1\circ_1\mu_1-\mu_1\circ_2\mu_1=0,\mlabel{eq:(re)_1}
\end{equation}
\begin{equation}
\mu_1\circ_1\mu_2-\mu_1\circ_2\mu_2=0,\mlabel{eq:(re)_2}
\end{equation}
\begin{equation}
\mu_2\circ_1\mu_1-\mu_2\circ_2\mu_1=0, \mlabel{eq:(re)_3}
\end{equation}
\begin{equation}
\mu_2\circ_1\mu_2-\mu_2\circ_2\mu_2=0, \mlabel{eq:(re)_4}
\end{equation}
\begin{equation}
\mu_1\circ_2\mu_2-\mu_2\circ_2\mu_1=0. \mlabel{eq:(re)_5}
\end{equation}

Let the monomials $\mu_1\circ_1\mu_1,\,\mu_1\circ_1\mu_2,\,\mu_2\circ_1\mu_1,\,\mu_2\circ_1\mu_2,\,\mu_1\circ_2\mu_2$ be the leading terms of relations~${\mref{eq:(re)_1}-\mref{eq:(re)_5}}$, respectively.

Then the above choices provide rewriting rules of the form
\begin{align*}
\mu_i\circ_1\mu_j &\mapsto& \mu_i\circ_2\mu_j\\
\mu_1\circ_2\mu_2 &\mapsto& \mu_2\circ_2\mu_1\\
\text{leading\, terms} &\mapsto& \text{lower\, and\, non\, leading terms}
\end{align*}
with $i,j \in\{\ 1,2\}$, which give rise to the following critical
monomials
\[\left\{
\begin{array}{ll}
\mu_i\circ_1\mu_j\circ_1\mu_k, i,j,k\in\{1,2\} & \\
\qquad\qquad\qquad\qquad\qquad\qquad \text{if\,all\,the\,leading\,terms\,are\,in}
~{\mref{eq:(re)_1}-\mref{eq:(re)_4}};\\
\mu_1\circ_2\mu_2\circ_1\mu_2,\, \mu_1\circ_1\mu_1\circ_2\mu_2,\,
 \mu_2\circ_1\mu_1\circ_2\mu_2,\, \mu_1\circ_2\mu_2\circ_1\mu_1 &\\
 \qquad\qquad\qquad\qquad\qquad\qquad \text{if\,there\,leading\,terms\,are\,in}
~\mref{eq:(re)_5}.
\end{array}
\right.
\]\mlabel{eq:cri}

According to the rewriting method in \text{chapter 8} in~\cite{LV}, it is enough to
 check that all the critical monomials in~\mref{eq:cri}
 are confluent. We can see that relations ~${\mref{eq:(re)_1}-\mref{eq:(re)_4}}$
 are of associative type. We know that the critical monomials of associative type
 are confluent. Since their diamond is the following pentagon, see (figure 1).



$$\xymatrix{
 &(\mu_i\circ_1\mu_j)\circ_1\mu_k\cr
 \swarrow & &\searrow\cr
 (\mu_i\circ_2\mu_j)\circ_1\mu_k & & \mu_i\circ_1(\mu_j\circ_2\mu_k)\cr
\searrow & & \swarrow\cr
\mu_i\circ_2(\mu_j\circ_1\mu_k)& \longrightarrow & \mu_i\circ_2(\mu_j\circ_2\mu_k)
}$$\mlabel{figure 2}

 In order to prove that the critical monomials of relation~$\mref{eq:(re)_5}$ are confluent, we take the first one
  in Eq~$\mref{eq:(re)_5}$ as an example and the others can be proved in a similar way, see (figure 2).

 $$\xymatrix{
 & &\mu_1\circ_2\mu_2\circ_1\mu_2 && \cr
 \swarrow & & & &\searrow \cr
 (\mu_2\circ_2\mu_1)\circ_1\mu_2& & & & \mu_1\circ_2(\mu_2\circ_1\mu_2)\cr
 \downarrow & & & & \downarrow \cr\mu_2\circ_2(\mu_1\circ_1\mu_2)
 & & & & \mu_1\circ_2(\mu_2\circ_2\mu_2)\cr \downarrow & & & & \downarrow \cr
 \mu_2\circ_2(\mu_1\circ_2\mu_2)& & & & \mu_2\circ_2(\mu_1\circ_2\mu_2)\cr \searrow
 & &\mu_2\circ_2(\mu_2\circ_2\mu_1)& & \swarrow && \cr
}$$\mlabel{figure 2}

Since all the critical monomials are confluent,
the operad $^{2}As$ is a Koszul operad.
 \begin{remark}{\rm
 \begin{enumerate}
 \item
 The rewriting method is due to  E.\ Hoffbeck \cite{Eh}.
 \item
 The reader can find more details about rewriting method of ns operad in chapter $8$ of~\cite{LV}.
 \end{enumerate}}

 \end{remark}
\end{proof}

\begin{prop}
In \mcite{St}, the operad $As^{2}$ of linearly compatible di-algebras
is the Koszul dual operad
of the operad $^{2}As$.

\end{prop}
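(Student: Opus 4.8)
The plan is to compute the Koszul dual $(^{2}As)^{!}$ directly from its quadratic presentation and to match it with the presentation of $As^{2}$. Both are binary quadratic ns operads generated by the same two-dimensional space $E=\bfk\mu_1\oplus\bfk\mu_2$, with $\mu_1=\opa$ and $\mu_2=\op$, and the arity-three quadratic space $E\circ E$ is eight-dimensional with basis $\{\mu_i\circ_1\mu_j,\ \mu_i\circ_2\mu_j\ :\ i,j\in\{1,2\}\}$. In this basis the relation space $R$ of $^{2}As$ is spanned, by Definition \mref{def:tcda}, by the five relators $\mu_1\circ_1\mu_1-\mu_1\circ_2\mu_1$, $\mu_1\circ_1\mu_2-\mu_1\circ_2\mu_2$, $\mu_2\circ_1\mu_1-\mu_2\circ_2\mu_1$, $\mu_2\circ_1\mu_2-\mu_2\circ_2\mu_2$ and $\mu_1\circ_2\mu_2-\mu_2\circ_2\mu_1$, so $\dim R=5$. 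Translating Definition \mref{def:lca} into the same basis, the relation space $R'$ of $As^{2}$ is spanned by the two associativity relators $\mu_1\circ_1\mu_1-\mu_1\circ_2\mu_1$ and $\mu_2\circ_1\mu_2-\mu_2\circ_2\mu_2$ together with the linear-compatibility relator $\mu_1\circ_1\mu_2+\mu_2\circ_1\mu_1-\mu_2\circ_2\mu_1-\mu_1\circ_2\mu_2$, whence $\dim R'=3$.

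Next I would recall the recipe for the Koszul dual of a binary quadratic ns operad: $\calp(E,R)^{!}=\calp(E^{\vee},R^{\perp})$, where $R^{\perp}$ is the orthogonal of $R$ with respect to the nondegenerate pairing on $E\circ E$ determined by the sign rule $\langle\mu_i\circ_1\mu_j,\mu_k\circ_1\mu_l\rangle=\delta_{ik}\delta_{jl}$, $\langle\mu_i\circ_2\mu_j,\mu_k\circ_2\mu_l\rangle=-\delta_{ik}\delta_{jl}$, with all mixed pairings $\langle\mu_i\circ_1\mu_j,\mu_k\circ_2\mu_l\rangle$ vanishing. The opposite signs on the two slots are exactly what makes $As^{!}=As$, and here they let us identify the dual generators $\mu_1^{\vee},\mu_2^{\vee}$ with $\mu_1,\mu_2$ once the suspension signs are absorbed into the pairing. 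The claim $As^{2}=(^{2}As)^{!}$ is thereby reduced to the single linear-algebra identity $R'=R^{\perp}$ inside the eight-dimensional space $E\circ E$.

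The core of the argument is then the orthogonality computation $\langle R',R\rangle=0$: I would pair each of the three spanning relators of $R'$ against each of the five spanning relators of $R$. Most of the fifteen pairings vanish on sight, because the relators involve distinct pairs of generators in the two slots and the Kronecker deltas kill them; the only surviving contributions occur in matched pairs of the shape $(+1)+(-1)$, produced by the opposite signs attached to the $\circ_1$- and $\circ_2$-components. Once $\langle R',R\rangle=0$ is verified, we get $R'\subseteq R^{\perp}$, and the dimension count $\dim R'=3=8-5=\dim(E\circ E)-\dim R=\dim R^{\perp}$ promotes this inclusion to the equality $R'=R^{\perp}$. Hence $As^{2}=\calp(E^{\vee},R^{\perp})=(^{2}As)^{!}$, as asserted.

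I expect the only real difficulty to be bookkeeping rather than conceptual. One must fix the sign convention in the Koszul pairing compatibly with the $\circ_1/\circ_2$ conventions already used for the rewriting rules, and translate each axiom carefully into the right monomial; for instance $x\opa(y\op z)$ must be recorded as $\mu_1\circ_2\mu_2$ and $(x\op y)\opa z$ as $\mu_1\circ_1\mu_2$, and a single transposition error here would spoil the orthogonality. No deeper input is required: the Koszulity of the two operads, established above by the rewriting method, is not needed for the duality statement itself, although it confirms \emph{a posteriori} the expected numerical relation between their generating series.
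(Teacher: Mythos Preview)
Your proposal is correct and follows essentially the same route as the paper: both arguments write down the five relators spanning $R\subset E\circ E$ for $^{2}As$, compute the annihilator $R^{\perp}$ with respect to the Loday--Vallette pairing, and identify $R^{\perp}$ with the three relators defining $As^{2}$. You are in fact more explicit than the paper, spelling out the sign convention on the pairing, sketching the fifteen orthogonality checks, and closing with the dimension count $3=8-5$; the paper simply asserts that the verification of $R^{\perp}$ is immediate.
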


\begin{remark}
{\rm In \mcite{St}, this result is a special case in Prop1.7. Here we give a different proof
when considered all the operads being nonsymmetric.}
\end{remark}

\begin{proof}
Let $(\cdot_1,\cdot_2)_1$ denote
the operation which sends $(x,y,z)$ to
$((x\cdot_1 y)\cdot_2 z)$ and $(\cdot_1,\cdot_2)_2$
denote
the operation which sends $(x,y,z)$ to
$(x\cdot_1(y\cdot_2 z))$, with $\cdot_1,\cdot_2\in \{\opa,\op\}$.
The space $R$ of relations of $^{2}As$
 is determined by the relators
$$\left \{\begin{array}{l}
(\ast,\ast)_2-(\ast,\ast)_1\\
(\op,\op)_2-(\op,\op)_1\\
(\op,\ast)_2-(\op,\ast)_1\\
(\ast,\op)_2-(\ast,\op)_1\\
(\op,\ast)_2-(\ast,\op)_1
\end{array}
\right\}.$$ It is immediate to verify that its annihilator
$R^{\bot}$, with respect to the given product in chapter 7 in \cite{LV}, is the
subspace determined by the following relators
$$\left \{\begin{array}{l}
(\ast,\ast)_2-(\ast,\ast)_1\\
(\op,\op)_2-(\op,\op)_1\\
(\op,\ast)_2+(\ast,\op)_2-(\op,\ast)_1-(\ast,\op)_{2}.
\end{array}
\right.$$
These are precisely the expected relations in definition~\mref{def:lca}
\end{proof}

By \cite{GK} and \mcite{St}, since the binary quadratic operad $^{2}As$ is a Koszul operad,
it follows that its Koszul dual operad $As^{2}$ is also Koszul.

\section{Homotopy Transfer Theorem for
linearly compatible di-algebras.}

In this section, we make explicit the notion of $As^{2}$-algebra
up to homotopy i.e. ${As^{2}}_{\infty}$-algebra by describing the dg operad $As^{2}_{\infty}$.

Since the operad $As^{2}$ is Koszul, the dg operad $As^{2}_{\infty}$ is given by
$As^{2}_{\infty}:=\Omega((As^{2})^{\scriptstyle \textrm{!`}})$. So we need to
describe
 $(As^{2})^{\scriptstyle \textrm{!`}}={^{2}As}^{*}$, which is the co-operad linearly dual to
$^{2}As$. By proposition~\mref{prop:tcd}, we know that the space
${^{2}As}^{*}_{n}$ is $n$-dimensional. In order to describe the
differential of the cobar construction $As^{2}_{\infty}$, we need to introduce the following
definition and lemma.
\begin{defn}[$\text{chapter}$ 6 in~\cite{LV}]
{\rm
For any co-operad $(\mathcal{C},\Delta,\eta)$ with counit $\eta:\calc\rightarrow I$, we consider
the projection of the decomposition map to the infinitesimal part of the composite product
$\calc \circ \calc$. This map is called the \emph{infinitesimal decomposition map} of $\calc$
and is defined by the following composite
\[ \Delta_{(1)}:=\calc\xrightarrow{\Delta} \calc\circ\calc
\xrightarrow{\text{Id}_{\calc}\circ^{'}\text{Id}_{\calc}} \calc\circ(\calc;\calc)
\xrightarrow{\text{Id}_{\calc}\circ(\eta;\text{Id}_{\calc})}
\calc\circ(\mathrm{I};\calc)=\calc\circ_{(1)}\calc,
\] where the notation $\text{Id}_{\calc}\circ^{'}\text{Id}_{\calc}$ is the infinitesimal composite of $\text{Id}_{\calc}$
and $\text{Id}_{\calc}$ and the notation $\calc\circ_{(1)}\calc$ denotes the infinitesimal
composite of $\calc$ and $\calc$.
}
\end{defn}

\begin{remark}
{\rm $\Delta_{(1)}$ is also called the linear part of the co-composition $\Delta$.}
\end{remark}

\begin{lemma}
[\text{chapter} 6 in \cite{LV}] For a given co-operad $(\calc,\Delta,\eta)$, the dg cobar construction
of the co-operad $\calc$ is given by
$$\Omega(\calc):=(\calt(s^{-1}\overline{\calc}),d),$$ with $d$ induced by $\Delta_{(1)}$
as follows
\[
d: {\bf k}s^{-1}\ot \overline{\calc}\xrightarrow{\Delta_{s}\ot \Delta_{(1)}} ({\bf k}s^{-1}\ot
{\bf k}s^{-1})\ot(\calc\circ_{(1)}\calc)\xrightarrow{\text{Id}\ot\tau\ot\text{Id}}
({\bf k}s^{-1}\ot \overline{\calc})\circ_{(1)}({\bf k}s^{-1}\ot \overline{\calc})
\cong \mathcal{T}(s^{-1}\overline{\calc})^{(2)}\rightarrowtail{} \mathcal{T}(s^{-1}\overline{\calc}),
\]
where $s$ is the decoration, modifying the degree of the objects in $\overline{\calc}$.\mlabel{lemma:cbd}

\end{lemma}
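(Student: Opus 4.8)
The plan is to exhibit $\Omega(\calc)$ as the free graded operad $\calt(s^{-1}\overline{\calc})$ equipped with a canonical degree $-1$ derivation, and then to reduce the dg-operad axiom to the single identity $d^2=0$, which I would in turn deduce from the coassociativity of the co-operad structure on $\calc$. First I would recall that, as a graded operad, $\calt(s^{-1}\overline{\calc})$ is free on the desuspended coaugmentation coideal $s^{-1}\overline{\calc}$; consequently any derivation is uniquely determined by its restriction to the generators, and conversely every degree $-1$ map $s^{-1}\overline{\calc}\to\calt(s^{-1}\overline{\calc})$ extends uniquely to a derivation via the graded Leibniz rule, with Koszul signs governed by the decoration $s$.

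Next I would define $d$ on generators by the displayed composite and unwind it. An element $s^{-1}c$ with $c\in\overline{\calc}$ is sent first by $\Delta_s\ot\Delta_{(1)}$ to $\pm(s^{-1}\ot s^{-1})\ot\Delta_{(1)}(c)$, where $\Delta_s(s^{-1})=s^{-1}\ot s^{-1}$ records the splitting of the decoration and $\Delta_{(1)}$ is the infinitesimal decomposition map of the preceding definition; the isomorphism $\text{Id}\ot\tau\ot\text{Id}$ then redistributes the two copies of $s^{-1}$ onto the two factors of $\calc\circ_{(1)}\calc$, producing an element of $(s^{-1}\overline{\calc})\circ_{(1)}(s^{-1}\overline{\calc})\cong\calt(s^{-1}\overline{\calc})^{(2)}$, that is, a sum of two-vertex trees. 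Since $\Delta_{(1)}$ lands in the reduced part $\overline{\calc}\circ_{(1)}\overline{\calc}$ once $\eta$ has removed the units, the output genuinely consists of generators grafted along one edge, so $d$ raises the tree-weight by exactly one while lowering the total degree by one, as required.

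It then remains to check $d^2=0$. Because $d$ is a derivation, so is $d^2$, and a derivation on a free operad vanishes as soon as it vanishes on generators; hence it suffices to show that the composite $s^{-1}\overline{\calc}\xrightarrow{d}\calt(s^{-1}\overline{\calc})^{(2)}\xrightarrow{d}\calt(s^{-1}\overline{\calc})^{(3)}$ is zero. Applying $d$ a second time differentiates each of the two vertices of the first output, so $d^2(s^{-1}c)$ is controlled by the two ways of iterating $\Delta_{(1)}$, namely the two composites $\calc\xrightarrow{\Delta_{(1)}}\calc\circ_{(1)}\calc\rightrightarrows\calc\circ_{(1)}\calc\circ_{(1)}\calc$ obtained by re-applying $\Delta_{(1)}$ to the left factor or to the right factor. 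The coassociativity of the co-operad decomposition map $\Delta$ forces these two composites to agree as three-vertex trees, so that after the desuspension signs are taken into account the two contributions to $d^2$ carry opposite signs and cancel in pairs. The sign bookkeeping here—coming from the Koszul rule as $d$ passes the first $s^{-1}$, combined with the branch-swapping built into the coassociativity constraint via $\tau$—is the only delicate point; once it is settled, $d^2=0$ follows and $(\calt(s^{-1}\overline{\calc}),d)$ is a dg operad, which is precisely the cobar construction $\Omega(\calc)$, in agreement with the corresponding construction in Chapter~6 of \cite{LV}.
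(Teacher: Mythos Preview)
The paper does not actually prove this lemma: it is stated with an explicit attribution to Chapter~6 of \cite{LV} and no proof follows in the text. It is quoted as a known result and used as a black box to compute the differential of $As^{2}_{\infty}$.

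Your sketch, by contrast, reproduces the standard argument from \cite{LV}: freeness of $\calt(s^{-1}\overline{\calc})$ reduces the construction of $d$ to its restriction to generators, the displayed composite defines that restriction, and $d^{2}=0$ follows from the infinitesimal coassociativity of $\Delta_{(1)}$ together with the Koszul sign rule. This is the correct outline. The one place where you leave work undone is the sign computation: you state that the two iterated decompositions ``carry opposite signs and cancel in pairs'' but do not verify this. In the actual argument one must track the sign $(-1)^{|s^{-1}|}$ coming from $\tau$ passing $s^{-1}$ across a cooperad factor, and check that the two ways of applying $\Delta_{(1)}$ twice (decomposing the top vertex versus the bottom vertex) produce contributions that differ by exactly this sign. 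Since this is precisely the ``only delicate point'' you flag, your write-up is a sketch rather than a full proof; but it is a correct sketch of the construction in \cite{LV}, and in any case goes further than the paper itself, which simply cites the result.
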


From lemma~\mref{lemma:cbd}, it is sufficient to make explicit the
infinitesimal part $\Delta_{(1)}$ in the operad ${^{2}As}^{*}$ to
get the differential map $\partial$ of cobar construction
$As^{2}_\infty=\calt((As^{2})^{\scriptstyle \textrm{!`}})$, without decoration $s$.

\begin{theorem}
The linear part of the co-composition $\Delta$ in ${^{2}As}^{*}$ is given by
$$\Delta_{(1)}(\mu_{c\,d}^{c})=\sum_{\substack{c=i+a\\
d=j+b}}
\mu_{i\,j}^{c}(\underbrace{id,\cdots,id}_{p\,\,\text{copies}},
\mu_{a\,b}^{c},\underbrace{id,\cdots,id}_{r\,\,\text{copies}})$$ excluding $(a,b)=(i,j)=(0,0)$ with
$\mu_{i\,j}\in {^{2}As}_n, i+j=n-1$.\mlabel{thm:tcd}
\end{theorem}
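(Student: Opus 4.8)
The plan is to read off $\Delta_{(1)}$ by dualizing the composition map $\gamma$ of ${}^{2}As$, whose explicit form is recorded in Proposition~\ref{prop:tcd}. Write $\{\mu_{ij}\}_{i+j=n-1}$ for the basis of ${}^{2}As_{n}$ and let $\{\mu_{ij}^{c}\}_{i+j=n-1}$ be the dual basis of ${}^{2}As^{*}_{n}$, so that $\langle\mu_{ij}^{c},\mu_{kl}\rangle=\delta_{ik}\delta_{jl}$. Since each ${}^{2}As_{n}$ is finite dimensional, ${}^{2}As^{*}$ is a cooperad whose co-composition $\Delta$ is the transpose of $\gamma$; consequently its linear part $\Delta_{(1)}$, the infinitesimal decomposition map of the Definition above, is the transpose of the infinitesimal composition $\gamma_{(1)}\colon {}^{2}As\circ_{(1)}{}^{2}As\to{}^{2}As$, which is assembled from the partial compositions $\mu_{ij}\circ_{p+1}\mu_{ab}$.

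First I would evaluate these partial compositions. Grafting $\mu_{ab}$ into the $(p+1)$-st input of $\mu_{ij}$ and filling the remaining $p+r=i+j$ slots with the unit $id=\mu_{00}$ is the special case of Proposition~\ref{prop:tcd} in which a single inner operation is non-trivial, giving
\[
\gamma(\mu_{ij};\underbrace{id,\dots,id}_{p},\mu_{ab},\underbrace{id,\dots,id}_{r})=\mu_{i+a\,j+b}.
\]
Note that the outcome depends only on the totals $i+a$ and $j+b$: it is independent of the slot $p+1$ and of the splitting $p+r=i+j$, because $\gamma$ merely adds the numbers of copies of $\opa$ and of $\op$.

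Next I would dualize. The infinitesimal composite ${}^{2}As^{*}\circ_{(1)}{}^{2}As^{*}$ has a basis of two-level trees $\mu_{ij}^{c}(id,\dots,id,\mu_{ab}^{c},id,\dots,id)$, dual to the trees $\mu_{ij}\circ_{p+1}\mu_{ab}$. By transposition the coefficient of such a tree in $\Delta_{(1)}(\mu_{cd}^{c})$ is $\langle\mu_{cd}^{c},\gamma_{(1)}(\mu_{ij}\circ_{p+1}\mu_{ab})\rangle=\langle\mu_{cd}^{c},\mu_{i+a\,j+b}\rangle=\delta_{c,i+a}\,\delta_{d,j+b}$. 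Thus only the trees with $i+a=c$ and $j+b=d$ survive, each with coefficient $1$, which yields exactly
\[
\Delta_{(1)}(\mu_{cd}^{c})=\sum_{\substack{c=i+a\\ d=j+b}}\mu_{ij}^{c}(\underbrace{id,\dots,id}_{p},\mu_{ab}^{c},\underbrace{id,\dots,id}_{r}),
\]
the sum running over all such pairs $(i,j),(a,b)$ and over all $p+r=i+j$ placements of the identities. Finally, since the cobar construction of Lemma~\ref{lemma:cbd} is built on the reduced cooperad $\overline{{}^{2}As^{*}}$, the arity-one element $\mu_{00}^{c}$ is deleted, so the degenerate decompositions with $(i,j)=(0,0)$ or $(a,b)=(0,0)$ are discarded; this is the stated exclusion, and the suspension decoration $s$ is suppressed as announced.

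The work here is bookkeeping rather than a deep obstacle, and the step where I would be careful is matching this combinatorial description with the abstract target in the Definition of $\Delta_{(1)}$: one must check that the counit projections $\mathrm{Id}_{\calc}\circ(\eta;\mathrm{Id}_{\calc})$ appearing there are precisely what collapse all but one inner factor to the unit, so that the transpose of the full co-composition $\Delta$ does restrict to the transpose of the partial compositions $\gamma_{(1)}$. Once this identification is fixed, the sign- and decoration-free formula is an immediate consequence of Proposition~\ref{prop:tcd}.
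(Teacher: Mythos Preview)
Your proposal is correct and follows essentially the same route as the paper: compute the partial composition $\gamma(\mu_{ij};id,\dots,id,\mu_{ab},id,\dots,id)=\mu_{i+a\,j+b}$ from Proposition~\ref{prop:tcd} and then dualize via the dual basis pairing to read off $\Delta_{(1)}$. The paper compresses this into two lines (``by the property of linearly dual basis, it is a straightforward computation''), while you spell out the transpose-of-$\gamma_{(1)}$ argument and the reason for the exclusion of the trivial factors, but the underlying idea is identical.
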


\begin{proof}
By the property of linearly dual basis, it is a straightforward computation.
Since $$\gamma(\mu_{i\,j};\underbrace{id,\cdots,id}_{p\,\, \text{copies}},
\mu_{a,b},\underbrace{id,\cdots,id}_{r\,\, \text{copies}})=\mu_{i+a\,j+b},$$
then
$$\Delta_{(1)}(\mu_{c\,d}^{c})=\sum_{c=i+a,\,d=j+b}\,\,
\mu_{i\,j}^{c}(\underbrace{id,\cdots,id}_{p\,\,\text{copies}},
\mu_{a\,b}^{c},\underbrace{id,\cdots,id}_{r\,\,\text{copies}}).$$
\end{proof}

Let $m_{i\,j}:=\mu_{i\,j}^{c}$ be the generator of the cobar construction $As^{2}_{\infty}$. Then we
get the following result.

\begin{theorem}
The operad $As^{2}_{\infty}$ is generated by the operations $m_{i\,j}$,
 with $|m_{i\,j}|=n-2$ \,for\, $i+j=n-1, i,j\geq 0$, which satisfy the following formula:
 $$\partial(m_{i\,j})=\sum_{\substack{a+c=i,\,b+d=j\\q=c+d+1,\,p+q+r=i+j+1}}(-1)^{p+qr}m_{a\,b}
 (\underbrace{id,\cdots,id}_{p\,\,\text{copies}},m_{c\,d},\underbrace{id,\cdots,id}_{r\,\,\text{copies}}).$$
\end{theorem}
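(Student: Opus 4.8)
The plan is to obtain $\partial$ directly from the infinitesimal decomposition map $\Delta_{(1)}$ of ${^2As}^*$ computed in Theorem~\ref{thm:tcd}, since Lemma~\ref{lemma:cbd} tells us that the cobar differential of $As^2_\infty = \Omega({^2As}^*)$ is the map induced by $\Delta_{(1)}$ together with the desuspension $s^{-1}$. I would first settle the degree claim. The Koszul dual cooperad $(As^2)^{\textrm{!`}} = {^2As}^*$ is binary quadratic, so its arity-$n$ part is concentrated in weight $n-1$ and hence in homological degree $n-1$; thus $\mu_{ij}^c$ with $i+j=n-1$ sits in degree $n-1$, and $m_{ij} = s^{-1}\mu_{ij}^c$ has degree $|m_{ij}| = n-2$, exactly as asserted.

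Next I would read off the unsigned terms. After renaming the bound summation indices of Theorem~\ref{thm:tcd}, one gets $\Delta_{(1)}(\mu_{ij}^c) = \sum \mu_{ab}^c(\id,\dots,\id,\mu_{cd}^c,\id,\dots,\id)$ summed over $a+c=i$ and $b+d=j$, where $\mu_{ab}^c$ is the outer cofactor and $\mu_{cd}^c$ the inner one. The inner operation has arity $q=c+d+1$ and is inserted after $p$ inputs, leaving $r$ inputs, so that $p+q+r = i+j+1$ is the arity of $m_{ij}$. This is precisely the index set of the stated formula, so the only remaining content is the sign $(-1)^{p+qr}$.

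The sign is the single step requiring genuine care. Following Lemma~\ref{lemma:cbd}, one applies $\Delta_s \ot \Delta_{(1)}$ and then the braiding $\mathrm{Id}\ot\tau\ot\mathrm{Id}$, which transports the two copies of $s^{-1}$ into position as prefactors of the outer and inner generators. The accumulated Koszul sign has two sources: the transposition of a desuspension symbol (degree $-1$) past the outer generator, and the operadic reshuffle that records the position $p$ of the inserted generator and the arity $q$ of the inner operation. Because every partial composition of $^2As$ merely adds the index pairs $(i,j)$ and therefore carries exactly the arity and degree bookkeeping of the associative operad $As$, the pair $(i,j)$ is \emph{passive} in both the degree and the sign, and the computation is formally identical to the $A_\infty$ computation in \cite{LV}. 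Carrying it out produces the exponent $p+qr$, giving $\partial(m_{ij}) = \sum (-1)^{p+qr}\, m_{ab}(\id,\dots,\id,m_{cd},\id,\dots,\id)$ over the index set above.

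The main obstacle is fixing the sign convention so that the exponent is precisely $p+qr$ and not one of the equivalent-looking variants that arise from differing suspension conventions. I would guard against transcription errors by verifying the two lowest-arity instances by hand and by checking $\partial^2(m_{ij})=0$. The latter holds a priori because $\Omega({^2As}^*)$ is a dg operad, but it is the sharpest consistency test that both the constraints $a+c=i$, $b+d=j$ and the exponent $p+qr$ have been recorded correctly.
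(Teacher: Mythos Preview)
Your proposal is correct and follows essentially the same route as the paper: you invoke Theorem~\ref{thm:tcd} for the unsigned shape of $\Delta_{(1)}$, Lemma~\ref{lemma:cbd} for the passage to the cobar differential, and then obtain the sign $(-1)^{p+qr}$ by comparison with the $A_\infty$ case, which is exactly what the paper does. Your write-up is in fact more explicit than the paper's, since you spell out the degree count $|m_{ij}|=n-2$ and explain why the bigrading $(i,j)$ is passive in the sign computation.
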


\begin{proof}
From the definition of $As^{2}_{\infty}$ and Theorem~\mref{thm:tcd},
it follows that  $As^{2}_{\infty}$ is generated by the operations $m_{i\,j}$.

By definition of the cobar construction in lemma~\mref{lemma:cbd}, the boundary
map on $\Omega((As^{2})^{\scriptstyle \textrm{!`}})$ is induced by the co-operad structure
of $(As^{(2)})^{\scriptstyle \textrm{!`}}$, and more precisely by $\Delta_{(1)}$ of linear
dual co-operad ${^{2}As}^{*}$ given by Theorem ~\mref{thm:tcd} as:
$$\Delta_{(1)}(\mu_{c\,d}^{c})=\sum_{\substack{c=i+a\\
d=j+b}}
\mu_{i\,j}^{c}(\underbrace{id,\cdots,id}_{p\,\,\text{copies}},
\mu_{a\,b}^{c},\underbrace{id,\cdots,id}_{r\,\,\text{copies}})$$ excluding $(a,b)=(i,j)=(0,0)$ with
$\mu_{i\,j}\in {^{2}As}_n, i+j=n-1$

By the construction of the differential given in lemma~\mref{lemma:cbd}, we have
 $$\partial(m_{i\,j})=\sum_{\substack{a+c=i,\,b+d=j\\q=c+d+1,\,p+q+r=i+j+1}}(-1)^{p+qr}m_{a\,b}
 (\underbrace{id,\cdots,id}_{p\,\,\text{copies}},m_{c\,d},\underbrace{id,\cdots,id}_{r\,\,\text{copies}}).$$

The signs are obtained by comparison with the dg operad $A_{\infty}$.
\end{proof}

From the above results, we get the Homotopy Transfer Theorem
for the operad $As^{2}$.

\begin{theorem}
Let

$$\xymatrix{
*{\qquad ({A,d_{A}})} \ar@(ul,dl)[]_{h} \ar@/^/[rr]^p
&& *{(V,d_{V})} \ar@/^/[ll]^i}$$
$$i=\text{quasi-isomorphism} \qquad\text{Id}_{A}-ip=d_{A}h+hd_{A},$$
 be a deformation retract. If $(A,d_{A})$ is a dg $As^{2}$-algebra,
  then $(V,d_{V})$ inherits an ${As^{2}}_{\infty}$-algebra structure
   $\{m_{ij}\}_{i+j=n-1}$ with $n\geq 2$,
  which extends functorially the binary operations of $A$.

\end{theorem}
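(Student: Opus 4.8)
The plan is to deduce this final Homotopy Transfer Theorem as a direct instance of the general machinery developed by Loday and Vallette in \cite{LV}, which we may invoke because the operad $As^{2}$ has been shown to be Koszul (it is the Koszul dual of the Koszul operad $^{2}As$). The key input from the previous theorem is the explicit description of the dg operad $As^{2}_{\infty}=\Omega((As^{2})^{\scriptstyle \textrm{!`}})$, together with the formula for the differential $\partial(m_{i\,j})$. Since an $As^{2}_{\infty}$-algebra is by definition an algebra over this dg operad, the statement we must prove reduces to verifying that the transferred structure on the deformation retract $(V,d_V)$ is precisely an algebra over $As^{2}_{\infty}$.

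First I would recall the general Homotopy Transfer Theorem (chapter 10 in \cite{LV}): for a Koszul operad $\calp$ with $\calp_{\infty}=\Omega(\calp^{\scriptstyle \textrm{!`}})$, any $\calp$-algebra structure on $A$ transfers along a deformation retract to a $\calp_{\infty}$-algebra structure on $V$, with the transferred operations given by explicit sums over trees decorated by the maps $i$, $p$, $h$ and the structure maps of $A$. Applying this with $\calp=As^{2}$, the hypothesis that $(A,d_A)$ is a dg $As^{2}$-algebra provides exactly the input required, and the conclusion gives an $As^{2}_{\infty}$-algebra on $(V,d_V)$. The transferred operations $\{m_{ij}\}_{i+j=n-1}$ are obtained by summing over the corresponding trees, and the generating binary operations $m_{10},m_{01}$ recover the images under $p$ of $i$ applied to $\opa,\op$, which is the sense in which the construction extends functorially the binary operations of $A$.

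The main verification is that these transferred operations indeed satisfy the relations encoded by $\partial$, i.e.\ that they assemble into a genuine morphism of dg operads from $As^{2}_{\infty}$ to the endomorphism operad of $V$; but this is guaranteed abstractly by the general theorem once $As^{2}_{\infty}$ is identified with the cobar construction, which was accomplished in Theorem~\ref{thm:tcd} and its corollary. The functoriality and the explicit tree-sum formulas are likewise part of the conclusion of the general result, so no new computation is required beyond matching notation.

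The step I expect to require the most care is the bookkeeping of signs and degrees: one must check that the degree convention $|m_{i\,j}|=n-2$ for $i+j=n-1$ is consistent with the suspension conventions in the cobar construction of Lemma~\ref{lemma:cbd}, and that the signs $(-1)^{p+qr}$ appearing in $\partial(m_{i\,j})$ match the Koszul signs produced by the transfer formula. As noted already, these signs are pinned down by comparison with the classical $A_{\infty}$ case, so the obstacle is purely one of careful sign-tracking rather than any genuine conceptual difficulty; the theorem then follows immediately.
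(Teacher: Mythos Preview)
Your proposal is correct and follows exactly the paper's approach: the paper's proof consists of a single sentence invoking the general Homotopy Transfer Theorem from chapter~10 of \cite{LV} applied to the Koszul operad $As^{2}$. Your write-up is more detailed than the paper's (you spell out the tree formulas, the role of the explicit description of $As^{2}_{\infty}$, and the sign bookkeeping), but the underlying argument is identical.
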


\begin{proof}
The conclusion is a direct consequence of the Homotopy Transfer
Theorem given in \text{chapter 10} in ~\cite{LV} applied to the  Koszul
operad $As^{2}$.

\end{proof}

$\bf{Acknowledgements}$ I am greatly indebted to my supervisors
professor Jean-Louis Loday (CNRS and Strasbourg University) and
professor Fang Li (Zhejiang University) for their constant support.
Part of this work was achieved in Strasbourg, France, during a visit sponsored by the Centre National de la Recherche Scientifique.

\end{document}